\newcommand{\iy}{\ensuremath{\infty}}
\newcommand{\R}{\ensuremath{\mathbb{R}}}
\newcommand{\C}{\ensuremath{\mathbb{C}}}
\newcommand{\N}{\ensuremath{\mathbb{N}}}
\newcommand{\W}{\ensuremath{\mathcal{W}}}
\newcommand{\A}{\ensuremath{\mathcal{A}}}
\renewcommand{\S}{\ensuremath{\mathcal{S}}}
\renewcommand{\H}{\ensuremath{\mathcal{H}}}
\newcommand{\K}{\ensuremath{\mathcal{K}}}
\newcommand{\F}{\ensuremath{\mathcal{F}}}
\newcommand{\B}{\ensuremath{\mathcal{B}}}
\newcommand{\ind}{\ensuremath{\mathbf{1}}}
\newcommand{\e}{\varepsilon}
\renewcommand{\phi}{\varphi}
\renewcommand{\leq}{\leqslant}
\renewcommand{\geq}{\geqslant}
\newcommand{\Id}{\mathrm{Id}}
\providecommand{\invstackrel}[2]{\mathop{#1}\limits_{#2}}
\providecommand{\norm}[1]{\lVert#1\rVert}
\providecommand{\scalar}[2]{\langle #1, #2 \rangle}
\newcommand{\transp}{\top}
\DeclareMathOperator{\Fock}{\Phi}
\DeclareMathOperator{\TFock}{T \Phi}
\newtheorem{theo}{Theorem}
\newtheorem{pr}{Proposition}
\newtheorem{lemma}{Lemma}
\newtheorem{cor}{Corollary}
\newtheorem*{theo*}{Theorem}
\author[S.~Attal]{St\'ephane ATTAL}
\address{
%
%
%
Universit\'e de
Lyon, Universit\'e de Lyon 1, Institut Camille Jordan, CNRS UMR
5208, 43, Boulevard du 11 novembre 1918, 69622 Villeurbanne Cedex,
France} 
\email{attal@math.univ-lyon1.fr, nechita@math.univ-lyon1.fr}
\author[I.~Nechita]{Ion NECHITA}
\title{Discrete approximation of the free Fock space}
\keywords{Free probability, free Fock space, toy Fock space, limit theorems} 
\subjclass[2000]{Primary
46L54. 
Secondary 
46L09, 
60F05} 
\begin{document}

\begin{abstract}
We prove that the free Fock space ${\F}(\R^+;\C)$, which is very commonly used in Free Probability Theory, is the continuous free product of copies of the space $\C^2$. We describe an explicit embedding and approximation of this continuous free product structure by means of a discrete-time approximation: the free toy Fock space, a countable free product of copies of $\C^2$. We show that the basic creation, annihilation and gauge operators of the free Fock space are also limits of elementary operators on the free toy Fock space. When applying these constructions and results to the probabilistic interpretations of these spaces, we recover some discrete approximations of the semi-circular Brownian motion and of the free Poisson process. All these results are also extended to the higher multiplicity case, that is, ${\F}(\R^+;\C^N)$ is the continuous free product of copies of the space $\C^{N+1}$.
\end{abstract}
\maketitle
\section{Introduction}
In \cite{attal-art} it is shown that the symmetric Fock space $\Gamma_s(L^2(\R^+;\C))$ is actually  the continuous tensor product $\otimes_{t\in\R^+}\C^2$. This result is obtained by means of an explicit embedding and approximation of the space $\Gamma_s(L^2(\R^+;\C))$ by countable tensor products $\otimes_{n\in h\N}\C^2$, when $h$ tends to 0. The result contains explicit approximation of the basic creation, annihilation and second quantization operators by means of elementary tensor products of 2 by 2 matrices. 

When applied to probabilistic interpretations of the corresponding spaces (e.g. Brownian motion, Poisson processes, ...), one recovers well-known approximations of these processes by random walks. This means that these different probabilistic situations and approximations are all encoded by the approximation of the three basic quantum noises: creation, annihilation and second quantization operators.

These results have found many interesting applications and developments in quantum statistical mechanics, for they furnished a way to obtain quantum Langevin equations describing the dissipation of open quantum systems as a continuous-time limit of basic Hamiltonian interactions of the system with the environment: repeated quantum interactions (cf \cite{attal-pautrat2, BJM, BP} for example). 

When considering the fermionic Fock space, even if it has not been written anywhere, it is easy to show that a similar structure holds, after a Jordan-Wigner transform on the spin-chain.

\smallskip 
It is thus natural to wonder if, in the case of the free Fock space, a similar structure, a similar approximation and similar probabilistic interpretations exist. Whereas the continuous tensor product structure of the bosonic Fock space exhibit its natural ``tensor-independence'' structure, it is natural to think that the free Fock space will exhibit a similar property with respect to the so-called ``free-independence'', as defined in Free Probability Theory. 

The key of our construction relies on the so-called ``free products of Hilbert spaces''. We needed to make explicit the constructions of countable free products, as a first step. Then, by an approximation method, to define the structure of continuous free products of Hilbert spaces. This structure appears to be exactly the natural one which describes the free Fock space and its basic operators.

\section{Free probability and the free Fock space}

Let us start by recalling the general framework of non commutative probability theory. A non commutative probability space is a couple $(\A, \phi)$, where $\A$ is a complex $*-$algebra (in general non commutative) and $\phi$ is a faithful positive linear form such that $\phi(1)=1$. We shall call the elements of $\A$ non commutative random variables. The distribution of a family $(x_i)_{i \in I}$ of self-adjoint random variables of $\A$ is the application which maps any non-commutative polynomial $P \in \C\langle X_i | i \in I \rangle$ to its moment $\phi(P((x_i)_{i \in I}))$. Thus, the map $\phi$ should be considered as the analogue of the expectation from classical probability theory. From this abstract framework, one can easily recover the setting of classical probability theory by considering a commutative algebra $\A$ (see \cite{hiai-petz, nica-speicher, voic-LN}).

In order to have an interesting theory, one needs a notion of independence for non commutative probability spaces. However, classical (or tensor) independence is not adapted in this more general setting. \emph{Free independence} was introduced by Voiculescu in the 1980's in order to tackle some problems in operator theory, and has found many applications since, mainly in random matrix theory. Freeness provides rules for computing mixed moments of random variables when only the marginal distributions are known. More precisely, unital sub-algebras $(\A_i)_{i \in I}$ of $\A$ are called \emph{free} (or \emph{freely independent}) if $\phi(a_1\cdots a_n)=0$ for all $n \in \N$ and $a_i \in \A_{j(i)}$ whenever $\phi(a_i)=0$ for all $i$ and neighboring $a_i$ do not come from the same sub-algebra: $j(1) \neq j(2) \neq \cdots \neq j(n)$. This definition allows one to compute mixed moments of elements coming from different algebras $\A_i$, when only the distributions inside each algebra $\A_i$ are known. Note that freeness is a highly non commutative property: two free random variables commute if and only if they are constant.

A remarkable setting in which freeness appears naturally is provided by creation and annihilation operators on the full Fock space. Let us now briefly describe this construction. Consider a complex Hilbert space $\H$ and define
\[\F(\H) = \bigoplus_{n=0}^\iy \H^{\otimes n},\] 
where $\H^{\otimes 0}$ is a one-dimensional Hilbert space we shall denote by $\C\Omega$. $\Omega \in \F(\H)$ is a distinguished norm one vector which is called the \emph{vacuum vector} and it will play an important role in what follows. 
For each $f \in \H$, we define the left \emph{creation} operator $\ell(f)$ and the left \emph{annihilation} operator $\ell^*(f)$ by
\begin{align*}
l(f)\Omega = f, & \quad l(f)e_1 \otimes \cdots \otimes e_n = f \otimes e_1 \otimes \cdots \otimes e_n;\\
l^*(f)\Omega = 0, & \quad l^*(f)e_1 \otimes \cdots \otimes e_n = \scalar{f}{e_1} e_2 \otimes \cdots \otimes e_n.\\
\end{align*}
For every $T \in \B(\H)$, the \emph{gauge (or second quantization) operator} $\Lambda(T) \in \B(\F(\H))$ is defined by
\[\Lambda(T)\Omega = 0,  \quad \Lambda(T)e_1 \otimes \cdots \otimes e_n = T(e_1) \otimes e_2 \otimes \cdots \otimes e_n.\]
All these operators are bounded, with $\norm{l(f)} = \norm{l^*(f)} = \norm{f}$ and $\norm{\Lambda(T)} = \norm{T}$.	
On the space $\B(\F(\H))$ of bounded operators on the full Fock space we consider the vector state given by the vacuum vector
\[\tau(X) = \scalar{\Omega}{X\Omega}, \quad X \in \B(\F(\H)).\]
The usefulness of the preceding construction when dealing with freeness comes from the following result (\cite{nica-speicher}).
\begin{pr}\label{pr:free_orth}
Let $\H$ be a complex Hilbert space and consider the non commutative probability space $(\B(\F(\H)), \tau)$. Let $\H_1, \ldots, \H_n$ be a family of orthogonal subspaces of $\H$, and, for each $i$, let $\A_i$ be the unital $*$-algebra generated by the set of operators
\[\{l(f) | f \in \H_i\} \cup \{\Lambda(T) | T \in \B(\H), T(\H_i) \subset \H_i \text{ and } T \text{ vanishes on }\H_i^\bot\}.\]
Then the algebras $\A_1, \ldots, \A_n$ are free in $(\B(\F(\H)), \tau)$.
\end{pr}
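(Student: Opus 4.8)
The plan is to verify freeness directly from the definition, by taking an alternating product of centered operators and showing that the vacuum state annihilates it. Concretely, suppose we are given $a_1 \cdots a_m$ with each $a_k \in \A_{j(k)}$, $j(1) \neq j(2) \neq \cdots \neq j(m)$, and $\tau(a_k) = 0$ for every $k$; the goal is to prove $\tau(a_1 \cdots a_m) = \scalar{\Omega}{a_1 \cdots a_m \Omega} = 0$. Since each $\A_i$ is generated as a $*$-algebra by creation operators $\ell(f)$ with $f \in \H_i$ and gauges $\Lambda(T)$ supported on $\H_i$, a generic element $a_k$ is a noncommutative polynomial in these generators (and their adjoints $\ell^*(f)$). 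The natural strategy is to reduce to monomials and then track, letter by letter, what such a monomial does to a vector of $\F(\H)$.

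The key structural observation I would isolate first is a \emph{grading/compatibility lemma}: any monomial $a$ built from the generators of $\A_i$ sends a tensor $e_1 \otimes \cdots \otimes e_n$ (with $e_1 \in \H_i$) to a combination of tensors whose \emph{leading} factor again lies in $\H_i$, while it sends a tensor whose leading factor lies in some $\H_j^{\perp} \supset \H_i$ orthogonal complement direction essentially unchanged in that first slot. More precisely, because $\ell(f)$ prepends a vector of $\H_i$, $\ell^*(f)$ contracts against the first factor (producing $0$ unless that factor has a component in $\H_i$), and $\Lambda(T)$ acts only on the first factor and preserves $\H_i$, the entire algebra $\A_i$ respects the decomposition of $\F(\H)$ according to whether the top tensor factor sits in $\H_i$ or is orthogonal to it. I would formalize this as: for $a \in \A_i$, the vector $a\,\xi$ has its ``$\H_i$-leading part'' determined by the $\H_i$-leading part of $\xi$, and when the leading factor of $\xi$ is orthogonal to $\H_i$, the operator $a$ acts on $\xi$ as though prepending/removing only $\H_i$-vectors above it. This is the heart of the argument and the step I expect to be the main obstacle, since one must set up the bookkeeping carefully enough to handle the gauge operators and the $*$-algebra closure simultaneously.

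Granting this lemma, the centering hypothesis $\tau(a_k)=0$ becomes the precise statement that $a_k \Omega$ has no component along $\Omega$, i.e.\ $a_k \Omega \in \bigoplus_{n \geq 1}\H^{\otimes n}$, and moreover (after subtracting the scalar part) every monomial contributing to $a_k$ either strictly raises the tensor degree while prepending an $\H_{j(k)}$-vector, or it lowers the degree by contracting against an $\H_{j(k)}$-component of the leading factor. I would then evaluate $a_1 \cdots a_m \Omega$ from right to left. Reading $a_m \Omega, a_{m-1}a_m\Omega, \dots$, I claim by induction that after applying $a_{k}\cdots a_m$ the resulting vector is supported on tensors whose leading factor lies in $\H_{j(k)}$ (this uses $j(k)\neq j(k+1)$ to guarantee that $a_{k+1}\cdots a_m\Omega$ presents a leading factor orthogonal to $\H_{j(k)}$, so $a_k$ cannot contract it away to reach a lower stratum but must instead prepend an $\H_{j(k)}$-vector). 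Consequently the leading tensor factor is always in some $\H_{j(1)}$ with nonzero tensor degree, so the final vector $a_1\cdots a_m\Omega$ is orthogonal to $\Omega$, giving $\tau(a_1\cdots a_m)=0$.

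By multilinearity of $\tau$ in each $a_k$ this suffices to treat arbitrary centered elements, not merely monomials, so freeness of $\A_1,\dots,\A_n$ follows. I would remark that the orthogonality hypothesis $\H_i \perp \H_j$ is exactly what makes the alternation $j(k)\neq j(k+1)$ produce leading factors in mutually orthogonal subspaces, which is what drives the inductive claim; without it the contraction steps could return to lower strata and the argument would collapse.
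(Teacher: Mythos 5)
The paper itself contains no proof of this proposition: it is quoted directly from the Nica--Speicher reference. So your proposal can only be compared with the standard argument from that source, and in fact your strategy is exactly that standard one: evaluate an alternating product of centered elements on $\Omega$, and show by a right-to-left induction that the leading tensor leg of $a_k \cdots a_m \Omega$ always lies in $\H_{j(k)}$, so that the final vector lives in tensor degree at least $1$ and is therefore orthogonal to $\Omega$. That global skeleton, including the role of pairwise orthogonality of the $\H_i$ in making the alternation $j(k) \neq j(k+1)$ effective, is correct.

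The soft spot is precisely the ``grading/compatibility lemma'' you yourself flag as the main obstacle: you assert it rather than prove it, and the intermediate claim you offer in its support --- that after centering, every contributing monomial either strictly raises the degree while prepending an $\H_{j(k)}$-vector, or strictly lowers it --- is false as stated: $\Lambda(T)$ and $\ell(f)\ell^*(g)$ do neither. More seriously, the hypothesis $\tau(a_k)=0$ only controls the coefficient of $\Omega$ in $a_k\Omega$, whereas your induction step needs the stronger fact that for \emph{every} tensor $\xi$ whose first leg is orthogonal to $\H_{j(k)}$, the component of $a_k \xi$ along $\xi$ itself (the part of your ``walk'' where the stack of prepended $\H_{j(k)}$-vectors returns to empty) vanishes; for that you must show this scalar is independent of $\xi$ and equal to $\tau(a_k)$. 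The standard way to discharge all of this at once is normal ordering. From the exact operator identities $\ell^*(f)\ell(g) = \scalar{f}{g}\,\Id$, $\Lambda(T)\ell(g) = \ell(Tg)$, $\ell^*(f)\Lambda(T) = \ell^*(T^*f)$ and $\Lambda(S)\Lambda(T) = \Lambda(ST)$ --- all of which stay inside the generating set, since operators $T$ as in the statement satisfy $T = P_{\H_i} T P_{\H_i}$ and this class is stable under products and adjoints --- every element of $\A_i$ is a scalar multiple of $\Id$ plus a linear combination of monomials $\ell(f_1)\cdots\ell(f_p)\,\Lambda(T)^{\delta}\,\ell^*(g_1)\cdots\ell^*(g_q)$ with $f_1, \ldots, f_p, g_1, \ldots, g_q \in \H_i$, $\delta \in \{0,1\}$ and $p+q+\delta \geq 1$ (each rewriting strictly shortens the word, so the reduction terminates). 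The state $\tau$ vanishes on every such monomial, so centering deletes exactly the scalar term. And each such monomial, applied to $\Omega$ or to a tensor whose first leg is orthogonal to $\H_i$, gives $0$ when $q \geq 1$ or when $q=0$, $\delta = 1$ --- this is where you use both the orthogonality of the subspaces and the hypothesis that $T$ vanishes on $\H_i^\perp$ --- and otherwise prepends $f_1 \otimes \cdots \otimes f_p$ with $f_1 \in \H_i$, raising the degree. With the lemma established in this form, your induction and the conclusion $\scalar{\Omega}{a_1\cdots a_m\Omega} = 0$ go through verbatim.
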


In the present note, we shall be concerned mostly with the case of $\H = L^2(\R_+; \C)$, the complex Hilbert space of square integrable complex valued functions; in Section \ref{sec:multi} we shall consider the more general case of  $L^2(\R_+; \C^N)$. Until then, we put $\Fock = \F(L^2(\R_+; \C))$, and we call this space the \emph{free (or full) Fock space}. An element $f \in \Fock$ admits a decomposition $f=f_0 \Omega + \sum_{n \geq 1} f_n$, where $f_0 \in \C$ and $f_n \in L^2(\R_+^n)$. In this particular case we shall denote the creation (resp. annihilation) operators by $a^+$ (resp. $a^-$):
\begin{align*}
a^+(h)\Omega = h, &\quad a^+(h)f_n = [(x_1, x_2, \ldots, x_n ,x_{n+1}) \mapsto h(x_1)f_n(x_2, \ldots, x_{n+1})],\\
a^-(h)\Omega = 0, &\quad a^-(h)f_n = [(x_2, \ldots,x_n) \mapsto \int{h(x)f_n(x, x_2\ldots, x_n) dx}],
\end{align*}
where $h$ is an arbitrary function of $L^2(\R_+)$. 
For a bounded function $b \in L^\iy(\R_+)$, let $a^\circ(b)$ be the gauge operator associated to the operator of multiplication by $b$:
\[a^\circ(b)\Omega = 0, \quad a^\circ(b)f_n = [(x_1, x_2, \ldots, x_n) \mapsto b(x_1)f_n(x_1, \ldots, x_n)],\]
and $a^\times(b)$ the scalar multiplication by $\int b$:
\[a^\times(b)\Omega =  \int b(x)dx\cdot \Omega, \quad a^\times(b)f_n = [(x_1, x_2, \ldots, x_n) \mapsto \left(\int b(x)dx\right) \cdot  f_n(x_1, \ldots, x_n)].\]
Finally, we note $\ind_t = \ind_{[0,t)}$ the indicator function of the interval $[0,t)$ and, for all $t \in \R_+$ and $\e \in \{+, -, \circ, \times\}$, we put $a_t^\e = a^\e(\ind_{[0, t)})$. Obviously, $a_t^\times = t\cdot \Id$.

\section{The free product of Hilbert spaces}

In the previous section we have seen that the algebras generated by creation, annihilation and gauge operators acting on orthogonal subspaces of a Hilbert space $\H$ are free in the algebra of bounded operators acting on the full Fock space $\F(\H)$. However, one would like, given a family of non commutative probability spaces, to construct a larger algebra which contains the initial algebras as sub-algebras which are freely independent. In classical probability (usual) independence is achieved by taking the tensor products of the original probability spaces. This is the reason why classical independence is sometimes called tensor independence. In the free probability theory, there is a corresponding construction called the \emph{free product}. Let us recall briefly this construction (see \cite{vdn, voic-LN} for further details).

Consider a family $(\H_i, \Omega_i)_{i \in I}$ where the $\H_i$ are complex Hilbert spaces and $\Omega_i$ is a distinguished norm one vector of $\H_i$. Let $\K_i$ be the orthocomplement of $\Omega_i$ in $\H_i$ and define the free product
\begin{equation}\label{eq:free_product_hilbert}
(\H, \Omega) = \invstackrel{\bigstar}{i \in I} (\H_i, \Omega_i) := \C \Omega \oplus \invstackrel{\bigoplus}{n \geq 1}\invstackrel{\bigoplus}{i_1 \neq i_2 \neq \cdots \neq i_n} \K_{i_1} \otimes \cdots \otimes \K_{i_n},
\end{equation}
where the direct sums are orthogonal and, as usual, $\norm \Omega = 1$. 
As in \cite{voic-LN}, we proceed with the identification of the algebras of bounded operators $\B(\H_i)$ inside $\B(\H)$. To this end, we shall identify an operator $T_i \in \B(\H_i)$, with the operator $\tilde T_i \in \B(\H)$ which acts in the following way: 
\begin{align}
\label{eq:op_embed_first}\tilde T_i(\Omega) &= T_i(\Omega_i)\\
\tilde T_i(k_i \otimes k_{j_1} \otimes \cdots \otimes k_{j_n}) &=  T_i(k_i) \otimes k_{j_1} \otimes \cdots \otimes k_{j_n} \\
\label{eq:op_embed_last}\tilde T_i(k_{j_1} \otimes \cdots \otimes k_{j_n}) &=  T_i(\Omega_i) \otimes k_{j_1} \otimes \cdots \otimes k_{j_n} 
\end{align}
where $j_1 \neq i$ and we identify an element of $\H_i$ with the corresponding element of $\H$. The main interest of this construction is the following straightforward result.

\begin{pr}
The algebras $\{\B(\H_i)\}_{i \in I}$ are free in $(\B(\H), \phi)$.
\end{pr}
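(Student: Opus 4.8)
The plan is to verify the definition of freeness directly, using the vacuum state $\phi(X) = \scalar{\Omega}{X\Omega}$ and the explicit embedding \eqref{eq:op_embed_first}--\eqref{eq:op_embed_last}. Concretely, given operators $T_{i_1}, \ldots, T_{i_n}$ with alternating indices $i_1 \neq i_2 \neq \cdots \neq i_n$ and satisfying the centering condition $\phi(\tilde T_{i_k}) = 0$ for every $k$, I must show that $\phi(\tilde T_{i_1}\cdots \tilde T_{i_n}) = 0$.

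First I would translate the centering condition into a geometric statement. Decomposing $T_i(\Omega_i) = \scalar{\Omega_i}{T_i\Omega_i}\,\Omega_i + k_i$ with $k_i \in \K_i$ the orthocomplement part, and using \eqref{eq:op_embed_first} together with the identification of $\Omega_i$ with $\Omega$, one gets $\phi(\tilde T_i) = \scalar{\Omega}{\tilde T_i \Omega} = \scalar{\Omega_i}{T_i\Omega_i}$. Hence $\phi(\tilde T_i) = 0$ is equivalent to $T_i(\Omega_i) = k_i \in \K_i$.

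The technical heart is a single action formula for $\tilde T_i$ on a reduced word. Combining the three rules \eqref{eq:op_embed_first}--\eqref{eq:op_embed_last} with the decomposition above (and recalling that $\Omega_i$ is the neutral vector $\Omega$, so that $\Omega_i \otimes k_{j_1}\otimes\cdots = k_{j_1}\otimes\cdots$), I would record that for a word $k_{j_1}\otimes\cdots\otimes k_{j_m}$ whose leading index satisfies $j_1 \neq i$,
\[
\tilde T_i\bigl(k_{j_1}\otimes\cdots\otimes k_{j_m}\bigr) = \scalar{\Omega_i}{T_i\Omega_i}\,k_{j_1}\otimes\cdots\otimes k_{j_m} + k_i \otimes k_{j_1}\otimes\cdots\otimes k_{j_m}.
\]
When $T_i$ is centered this collapses to the clean rule that $\tilde T_i$ merely prepends the vector $k_i \in \K_i$, raising the tensor level by one and producing a word whose leading index is $i$.

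With this formula the conclusion follows by induction from the right. Starting from $\tilde T_{i_n}\Omega = k_{i_n} \in \K_{i_n}$, and noting that at each subsequent step the word produced has leading index $i_{m+1}$, which differs from the index $i_m$ of the operator being applied (by the alternating hypothesis), the prepending rule applies at every stage and yields
\[
\tilde T_{i_m}\cdots \tilde T_{i_n}\,\Omega = k_{i_m}\otimes k_{i_{m+1}}\otimes\cdots\otimes k_{i_n} \in \K_{i_m}\otimes\cdots\otimes\K_{i_n}.
\]
In particular $\tilde T_{i_1}\cdots\tilde T_{i_n}\Omega$ lands in the level-$n$ summand $\K_{i_1}\otimes\cdots\otimes\K_{i_n}$ of the free product, which is orthogonal to $\C\Omega$; pairing with $\Omega$ gives $\phi(\tilde T_{i_1}\cdots\tilde T_{i_n}) = 0$, as required. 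The main obstacle is bookkeeping rather than conceptual: one must check that the three embedding rules genuinely cover every case arising in the induction and that the identification $\Omega_i \simeq \Omega$ is used consistently, so that the vacuum component never reappears once the centering has removed it.
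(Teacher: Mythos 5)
Your proof is correct and follows essentially the same route as the paper's: both induct from the right, showing that the centering condition forces $T_{i_k}\Omega_{i_k} \in \K_{i_k}$, so that $\tilde T_{i_1}\cdots \tilde T_{i_n}\Omega = [T_{i_1}\Omega_{i_1}]\otimes\cdots\otimes[T_{i_n}\Omega_{i_n}]$ lies in $\K_{i_1}\otimes\cdots\otimes\K_{i_n}$, which is orthogonal to $\C\Omega$. Your explicit ``prepend plus scalar'' action formula is just a more carefully spelled-out version of the case analysis the paper leaves implicit in the phrase ``continuing this way.''
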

\begin{proof}
Consider a sequence $T_{i(1)}, \ldots, T_{i(n)}$ of elements of $\B(\H_{i(1)}), \ldots, \B(\H_{i(n)})$ respectively such that $i(1) \neq i(2) \neq \cdots \neq i(n)$ and $\scalar{\Omega_{i(k)}}{T_{i(k)}\Omega_{i(k)}} = 0$ for all $k$. By the definition of freeness, it suffices to show that $\scalar{\Omega}{\tilde T_{i(1)} \cdots \tilde T_{i(n)}\Omega} = 0$. Using the previously described embedding, we get $ \tilde T_{i(n)} \Omega = T_{i(n)} \Omega_{i(n)}$. Since $i(n-1) \neq i(n)$ and $ \tilde T_{i(n)} \Omega \notin \C\Omega$, it follows that $\tilde T_{i(n-1)} \tilde T_{i(n)} \Omega =  [T_{i(n-1)} \Omega_{i(n-1)}] \otimes [T_{i(n)} \Omega_{i(n)}]$. Continuing this way, it is easy to see that $\tilde T_{i(1)} \cdots \tilde T_{i(n)} \Omega =  [T_{i(1)} \Omega_{i(1)}] \otimes \cdots \otimes [T_{i(n)} \Omega_{i(n)}]$, and the conclusion follows. 
\end{proof}

We look now at the free Fock space of a direct sum of Hilbert spaces. In the symmetric case (see \cite{attal-art}), it is known that one has to take the tensor product of the symmetric Fock spaces in order to obtain the Fock space of the sum. The free setting admits an analogue \emph{exponential property}, where instead of the tensor product one has to use the free product introduced earlier.
\begin{lemma} Consider a family of orthogonal Hilbert spaces $(\H_i)_{i \in I}$. Then
\begin{equation}\label{eq:exp}
\F(\oplus_{i \in I} \H_i) = \bigstar_{i \in I} \F(\H_i).
\end{equation}
\end{lemma}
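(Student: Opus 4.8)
The plan is to expand both sides as orthogonal direct sums of tensor products of the spaces $\H_i$, and then to exhibit a canonical bijection between the two indexing sets which matches the summands one by one.

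First I would expand the left-hand side. Distributing the Hilbert tensor product over the orthogonal direct sum gives $(\oplus_{i\in I} \H_i)^{\otimes n} = \bigoplus_{(j_1,\ldots,j_n) \in I^n} \H_{j_1} \otimes \cdots \otimes \H_{j_n}$, so that
\[
\F\Big(\bigoplus_{i \in I} \H_i\Big) = \C\Omega \oplus \bigoplus_{n \geq 1} \bigoplus_{(j_1, \ldots, j_n) \in I^n} \H_{j_1} \otimes \cdots \otimes \H_{j_n}.
\]
Next I would expand the right-hand side. Each factor $\F(\H_i)$ carries its vacuum $\Omega$ as distinguished vector, and the corresponding orthocomplement is $\K_i = \bigoplus_{k \geq 1} \H_i^{\otimes k}$. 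Substituting this into the definition \eqref{eq:free_product_hilbert} and again distributing the tensor product over the direct sums yields
\[
\bigstar_{i \in I} \F(\H_i) = \C\Omega \oplus \bigoplus_{m \geq 1} \; \bigoplus_{i_1 \neq \cdots \neq i_m} \; \bigoplus_{n_1, \ldots, n_m \geq 1} \H_{i_1}^{\otimes n_1} \otimes \cdots \otimes \H_{i_m}^{\otimes n_m}.
\]

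The heart of the argument is then a purely combinatorial identification of the two index sets. Every word $(j_1, \ldots, j_n) \in I^n$ decomposes uniquely into its maximal runs of consecutive equal letters; this produces a \emph{reduced} word $i_1 \neq i_2 \neq \cdots \neq i_m$ together with multiplicities $n_1, \ldots, n_m \geq 1$ satisfying $n_1 + \cdots + n_m = n$, and conversely this data reconstructs the original word. Under this bijection the summand $\H_{j_1} \otimes \cdots \otimes \H_{j_n}$ appearing on the left is exactly the summand $\H_{i_1}^{\otimes n_1} \otimes \cdots \otimes \H_{i_m}^{\otimes n_m}$ appearing on the right, the two being canonically, hence unitarily, isomorphic through the associativity of the Hilbert tensor product. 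The vacuum lines $\C\Omega$ match trivially. Assembling these isomorphisms over all summands furnishes a unitary between dense algebraic direct sums which, being isometric, extends to the completions and gives the asserted identification \eqref{eq:exp}.

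I expect the only delicate point to be the bookkeeping: one must check that ``grouping consecutive equal indices'' really is a bijection respecting the orthogonal decompositions on both sides, and that reassociating the tensor factors is performed consistently across all summands. Both are routine once the run-length bijection is set up, so no serious obstacle should remain; the content of the lemma is entirely carried by the observation that a reduced word with multiplicities is the same datum as an arbitrary word.
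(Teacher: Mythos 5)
Your proof is correct and takes essentially the same route as the paper's: the paper fixes orthonormal bases $(X^j(i))_{j\in B(i)}$ of the $\H_i$ and matches basis vectors of the two sides by grouping adjacent tensor factors carrying the same index $i$, which is precisely your run-length bijection expressed at the level of basis elements instead of direct summands. Your basis-free formulation, with the unitary assembled summand by summand and extended to the completion, is a slightly more detailed write-up of the very argument the paper sketches and ``leaves to the reader.''
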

\begin{proof}
Fix for each $\H_i$ an orthonormal basis $(X^j(i))_{j \in B(i)}$. Then, an orthonormal basis of $\F(\oplus \H_i)$ is given by $\{\Omega\} \cup \{X^{j_1}(i_1) \otimes \cdots \otimes X^{j_n}(i_n)\}$, where  $n \geq 1$, $i_k \in I$  and $j_k \in B(i_k)$ for all $1 \leq k \leq n$. One obtains a Hilbert space basis of $\bigstar \F(\H_i)$ by grouping adjacent elements of $X^{j_1}(i_1) \otimes \cdots \otimes X^{j_n}(i_n)$ with the same $i$-index (i.e. belonging to the same $\H_i$). Details are left to the reader.
\end{proof}

\section{The free toy Fock space}\label{sec:free_toy_fock}

In this section we introduce the \emph{free toy Fock space}, the main object of interest in our paper. 
From a probabilistic point of view, it is the ``smallest'' non commutative probability space supporting a free identically distributed countable family of Bernoulli random variables (see Section \ref{sec:free_prob}). 

The free toy Fock space is a countable free product of two-dimensional complex Hilbert spaces: in equation (\ref{eq:free_product_hilbert}), take $\H_i=\C^2$ for all $i$. In order to keep track of which copy of $\C^2$ we are referring to, we shall label the $i$-th copy with $\C^2_{(i)}$. Each copy is endowed with the canonical basis $\{\Omega_i = (1, 0)^\transp, X_i = (0, 1)^\transp\}$. Since the orthogonal space of $\C\Omega_i$ is simply $\C X_i$, we obtain the following simple definition of the free toy Fock space $\TFock$:
\[(\TFock, \Omega) := \invstackrel{\bigstar}{i \in \N} (\C^2_{(i)}, \Omega_i) = \C \Omega \oplus \invstackrel{\bigoplus}{n \geq 1}\invstackrel{\bigoplus}{i_1 \neq \cdots \neq i_n} \C X_{i_1} \otimes \cdots \otimes \C X_{i_n},\]
where, as usual, $\Omega$ is the identification of the vacuum reference vectors $\Omega_i$ ($\norm{\Omega} = 1$). Note that the orthonormal basis of $\TFock$ given by this construction is indexed by the set of all finite (eventually empty) words with letters from $\N$ with the property that neighboring letters are distinct. More formally, a word $\sigma = [i_1, i_2, \ldots, i_n] \in \N^n$ is called \emph{adapted} if $i_1 \neq i_2 \neq \cdots \neq i_n$. By convention, the empty word $\emptyset$ is adapted. We shall denote by $\W_n$ (resp. $\W_n^*$) the set of all words (resp. adapted words) of size $n$ and by $\W$ (resp. $\W^*$) the set of all words (resp. adapted words) of finite size (including the empty word).  For a word $\sigma = [i_1, i_2, \ldots, i_n]$, let $X_\sigma$ be the tensor $X_{i_1} \otimes X_{i_2} \otimes \cdots \otimes X_{i_n}$ and put $X_\emptyset = \Omega$. With this notation, an orthonormal basis of $\TFock$ is given by $\{X_\sigma\}_{\sigma \in \W^*}$.

We now turn to operators on $\C^2_{(i)}$ and their embedding into $\B(\TFock)$. We are interested in the following four operators acting on $\C^2$:
\[a^+=\begin{bmatrix}
0&0\\
1&0
\end{bmatrix}, \quad a^-=\begin{bmatrix}
0&1\\
0&0
\end{bmatrix}, \quad a^\circ=\begin{bmatrix}
0&0\\
0&1
\end{bmatrix}, \quad a^\times=\begin{bmatrix}
1&0\\
0&0
\end{bmatrix}.
\]

For $\e \in \{+, -, \circ, \times\}$, we shall denote by $a_i^\e$ the image of $a^\e$ acting on the $i$-th copy of $\C^2$, viewed (by the identification described earlier in eq. (\ref{eq:op_embed_first}) - (\ref{eq:op_embed_last})) as an operator on $\TFock$. The action of these operators on the orthonormal basis of $\TFock$ is rather straightforward to compute ($\sigma = [\sigma_1, \ldots, \sigma_n]$ is an arbitrary non-empty adapted word and $\ind$ is the indicator function):
\begin{align}\label{eq:op_toy_Fock_first}
a_i^+ \Omega = X_i, \quad &a_i^+ X_\sigma = \ind_{\sigma_1 \neq i}X_{[i, \sigma]};\\
a_i^- \Omega = 0, \quad &a_i^- X_\sigma = \ind_{\sigma_1 = i}X_{[\sigma_2, \ldots, \sigma_n]};\\
\label{eq:op_toy_Fock_next_to_last}a_i^\circ \Omega = 0, \quad &a_i^\circ X_\sigma = \ind_{\sigma_1 = i}X_\sigma;\\
\label{eq:op_toy_Fock_last}a_i^\times \Omega = \Omega, \quad &a_i^\times X_\sigma = \ind_{\sigma_1 \neq i}X_\sigma.
\end{align}

\section{Embedding of the toy Fock space into the full Fock space}

Our aim is now to show that the free toy Fock space can be realized as a closed subspace of the full (or free) Fock space $\Fock = \F(L^2(\R_+;\C))$ of square integrable functions. What is more, to each partition of $\R_+$ we shall associate such an embedding, and, as we shall see in the next section, when the diameter of the partition becomes small, one can approximate the full Fock space with the (much simpler) toy Fock space. 

Let $\S = \{0=t_0 < t_1 < \cdots < t_n < \cdots\}$ be a partition of $\R_+$ of diameter $\delta(\S) = \sup_i |t_{i+1} - t_i|$. The main idea of \cite{attal-art} was to decompose the symmetric Fock space of $L^2(\R_+)$ along the partition $\S$. In our free setting we have an analogue exponential property (see eq. (\ref{eq:exp})):
\[\Fock = \invstackrel{\bigstar}{i \in \N} \Fock_i,\]
where $\Fock_i = \F(L^2[t_i, t_{i+1}))$, the countable free product being defined with respect to the vacuum functions. Inside each Fock space $\Fock_i$, we consider two distinguished functions: the vacuum function $\Omega_i$ and the normalized indicator function of the interval $[t_i, t_{i+1})$:
\[X_i = \frac{\ind_{[t_i, t_{i+1})}}{\sqrt{t_{i+1} - t_i}} = \frac{\ind_{t_{i+1}} - \ind_{t_i}}{\sqrt{t_{i+1} - t_i}}.\]
These elements span a 2-dimensional vector space $\C\Omega_i \oplus \C X_i$ inside each $\Fock_i$. The toy Fock space associated to the partition $\S$ is the free product of these two-dimensional vector spaces:
\[\TFock(\S) = \invstackrel{\bigstar}{i \in \N} (\C\Omega_i \oplus \C X_i).\]
$\TFock(\S)$ is a closed subspace of the full Fock space $\Fock$ and it is naturally isomorphic (as a countable free product of two-dimensional spaces) to the abstract free toy Fock space $\TFock$ defined in the previous section. It is spanned by the orthonormal family $\{X_\sigma\}_{\sigma \in \W^*}$, where $X_\sigma = X_\sigma(\S)$ is defined by
\[X_\sigma = X_{\sigma_1} \otimes X_{\sigma_2} \otimes \cdots \otimes X_{\sigma_n} = \left[(x_1, \ldots, x_n) \mapsto  \frac{\prod_{j=1}^n\ind_{[t_{\sigma_j} , t_{\sigma_j+1})}(x_j)}{\prod_{j=1}^n\sqrt{t_{\sigma_j+1} - t_{\sigma_j}}}\right],\]
with $\sigma=[\sigma_1, \ldots, \sigma_n]$. We shall denote by $P_\S \in \B(\Fock)$ the orthogonal projector on $\TFock(\S)$. For a function $f \in \Fock$, which admits a decomposition $f=f_0 \Omega + \sum_{n \geq 1} f_n$ with $f_0 \in \C$ and $f_n \in L^2(\R_+^n)$, the action of $P_\S$ is straightforward to compute:
\[P_\S f = f_0 \Omega + \sum_{n \geq 1} \sum_{\sigma \in \W_n^*} \scalar{X_\sigma}{f_n} X_\sigma,\]
where the scalar products are taken in the corresponding $L^2$ spaces.


We ask now how the basic operators $a^\e_t$, $\e \in \{+, -, \circ, \times\}$, $t \in \R^+$ of the free Fock space relate to their discrete counterparts $a^\e_i$. In order to do this, we consider the following rescaled restrictions of $a^+_t$,  $a^-_t$ and $a^\circ_t$ on the toy Fock space $\TFock(\S)$:
\begin{align}
\label{eq:approx_first}a_i^+(\S) &= P_\S\frac{a^+_{t_{i+1}} - a^+_{t_i}}{\sqrt{t_{i+1} - t_i}}P_\S = P_\S a^+\left( \frac{\ind_{[t_i, t_{i+1})}}{\sqrt{t_{i+1} - t_i}} \right)P_\S;\\
a_i^-(\S) &= P_\S\frac{a^-_{t_{i+1}} - a^-_{t_i}}{\sqrt{t_{i+1} - t_i}}P_\S = P_\S a^-\left( \frac{\ind_{[t_i, t_{i+1})}}{\sqrt{t_{i+1} - t_i}} \right)P_\S;\\
\label{eq:approx_last}a_i^\circ(\S) &= P_\S(a^\circ_{t_{i+1}} - a^\circ_{t_i})P_\S  = P_\S a^\circ\left( \ind_{[t_i, t_{i+1})} \right)P_\S.
\end{align}
The operators $a_i^\e(\S) \in \B(\Fock)$ are such that $a_i^\e(\S)(\TFock(\S)) \subset \TFock(\S)$ and they vanish on $\TFock(\S)^\bot$, so one can also see them as operators on $\TFock(\S)$. For $\e = \times$, one can not define $a_i^\times(\S)$ from $a_t^\times$ as it was done in eq. (\ref{eq:approx_first}) -- (\ref{eq:approx_last}). Instead, we define it as the linear extension of $a_i^\times$ (via the isomorphism $\TFock \simeq \TFock(\S)$) which vanishes on $\TFock(\S)^\bot$. Hence, $a_i^\times(\S) = P_\S(\Id-a_i^\circ(\S))P_\S$.
\begin{pr}
For $\e \in \{+, -, \circ, \times\}$, the operators $a^\e_i(\S)$, acting on the toy Fock space $\TFock(\S)$,  behave in the same way as their discrete counterparts $a^\e_i$.
\end{pr}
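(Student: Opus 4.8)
The plan is to verify that the action of each operator $a_i^\e(\S)$ on the orthonormal basis $\{X_\sigma\}_{\sigma \in \W^*}$ of $\TFock(\S)$ coincides exactly with the action of the abstract operator $a_i^\e$ given by equations (\ref{eq:op_toy_Fock_first})--(\ref{eq:op_toy_Fock_last}). Since both the abstract and concrete operators vanish on $\TFock(\S)^\bot$ by construction, it suffices to compute $a_i^\e(\S) X_\sigma$ for an arbitrary adapted word $\sigma = [\sigma_1, \ldots, \sigma_n]$ (and for $\Omega$) and to check that the result matches. The key point throughout is that, because $X_i = \ind_{[t_i,t_{i+1})}/\sqrt{t_{i+1}-t_i}$ is a \emph{normalized} indicator function, the rescaling factors $1/\sqrt{t_{i+1}-t_i}$ in (\ref{eq:approx_first})--(\ref{eq:approx_last}) are exactly what is needed to turn the $L^2$ creation/annihilation operators into norm-one ladder operators on the discrete basis.

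First I would treat the creation operator. Applying $a^+(\ind_{[t_i,t_{i+1})}/\sqrt{t_{i+1}-t_i})$ to $X_\sigma$ prepends the function $X_i$ to the tensor, producing $X_i \otimes X_\sigma = X_{[i,\sigma]}$; the projector $P_\S$ then retains this vector precisely when $[i,\sigma]$ is adapted, i.e. when $\sigma_1 \neq i$, and kills it otherwise. This reproduces $a_i^+ X_\sigma = \ind_{\sigma_1 \neq i} X_{[i,\sigma]}$, with $a_i^+\Omega = X_i$ following from $a^+(X_i)\Omega = X_i$. For the annihilation operator, applying $a^-(X_i)$ contracts the first tensor slot against $X_i$: by orthonormality of the $X_j$ the contraction $\scalar{X_i}{X_{\sigma_1}}$ equals $\ind_{\sigma_1 = i}$, leaving $X_{[\sigma_2,\ldots,\sigma_n]}$, which matches $a_i^-$ exactly. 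Here the normalization of $X_i$ is essential, since the raw $L^2$ inner product $\int \ind_{[t_i,t_{i+1})}(x)\ind_{[t_{\sigma_1},t_{\sigma_1+1})}(x)\,dx$ equals $(t_{i+1}-t_i)\ind_{\sigma_1=i}$, and the two $\sqrt{t_{i+1}-t_i}$ factors absorb this. The gauge operator $a_i^\circ(\S)$ acts on $f_n$ by multiplication by $\ind_{[t_i,t_{i+1})}(x_1)$; on the basis vector $X_\sigma$, whose first variable is supported on $[t_{\sigma_1},t_{\sigma_1+1})$, this multiplication returns $X_\sigma$ if $\sigma_1 = i$ and $0$ otherwise, matching $a_i^\circ$.

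Finally, the case $\e = \times$ is immediate from its definition: $a_i^\times(\S) = P_\S(\Id - a_i^\circ(\S))P_\S$ gives $a_i^\times \Omega = \Omega$ and $a_i^\times X_\sigma = X_\sigma - \ind_{\sigma_1=i}X_\sigma = \ind_{\sigma_1 \neq i} X_\sigma$, exactly the abstract formula (\ref{eq:op_toy_Fock_last}). I would organize the write-up as four short verifications, one per value of $\e$. I do not expect a genuine obstacle here; the only point requiring care is the bookkeeping at the vacuum vector and the handling of the adaptedness constraint $\sigma_1 \neq i$ under the projector $P_\S$ in the creation case, since one must confirm that $P_\S$ discards exactly the non-adapted tensors $X_i \otimes X_\sigma$ with $\sigma_1 = i$ rather than merely reprojecting them. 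This follows from the explicit description of the range of $P_\S$ as the span of $\{X_\sigma\}_{\sigma \in \W^*}$.
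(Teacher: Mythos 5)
Your proposal is correct and follows essentially the same route as the paper: a direct verification of the action of each of the four operators $a_i^\e(\S)$ on the orthonormal basis $\{X_\sigma(\S)\}_{\sigma \in \W^*}$ (and on $\Omega$), matching it against equations (\ref{eq:op_toy_Fock_first})--(\ref{eq:op_toy_Fock_last}), with the case $\e = \times$ deduced from the definition $a_i^\times(\S) = P_\S(\Id - a_i^\circ(\S))P_\S$. In fact you make explicit two points the paper leaves implicit --- that the normalization of $X_i$ absorbs the $L^2$ factor $(t_{i+1}-t_i)$ in the annihilation contraction, and that $P_\S$ genuinely annihilates (rather than reprojects) the non-adapted tensor $X_i \otimes X_\sigma$ when $\sigma_1 = i$ --- both of which are accurate and strengthen the write-up.
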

\begin{proof}
For each $\sigma = [\sigma_1, \sigma_2, \ldots, \sigma_n] \in \W^*$, consider the corresponding basis function of $\TFock(\S)$:
\[X_\sigma(\S) = \frac{\ind_\sigma(\S)}{\prod_{j=1}^n\sqrt{t_{\sigma_j+1} - t_{\sigma_j}}}, \] 
where $\ind_\sigma(\S)$ is the indicator function of the rectangle $\times_{j=1}^n[t_{\sigma_j}, t_{\sigma_j+1})$. We have:
\begin{align*}
a_i^+(\S)X_\sigma(\S) &= P_\S\frac{a^+(\ind_{[t_i, t_{i+1})})}{\sqrt{t_{i+1} - t_i}}X_\sigma(\S) = P_\S X_{[i,\sigma]}(\S) = \ind_{\sigma_1 \neq i}X_{[i,\sigma]}(\S), \\
a_i^-(\S)X_\sigma(\S) &= P_\S\frac{a^-(\ind_{[t_i, t_{i+1})})}{\sqrt{t_{i+1} - t_i}}X_\sigma(\S) = P_\S \ind_{\sigma_1 = i}X_{[\sigma_2, \ldots, \sigma_n]}(\S) = \ind_{\sigma_1 = i}X_{[\sigma_2, \ldots, \sigma_n]}(\S), \\
a_i^\circ(\S)X_\sigma(\S) &= P_\S a^\circ(\ind_{[t_i, t_{i+1})})X_\sigma(\S) = P_\S \ind_{\sigma_1 = i}X_\sigma(\S) = \ind_{\sigma_1 = i}X_\sigma(\S).
\end{align*}
These relations are identical to the action of the corresponding operators $a^\e_i$ on the abstract toy Fock space $\TFock \simeq \TFock(\S)$ (compare to eq. (\ref{eq:op_toy_Fock_first}) -- (\ref{eq:op_toy_Fock_next_to_last})). For $a^\times_i(\S)$, the conclusion is immediate from the last equation above and its definition:
\[a_i^\times(\S)X_\sigma(\S) = P_\S [\Id-a_i^\circ(\S)]X_\sigma(\S) = X_\sigma(\S) -  \ind_{\sigma_1 = i}X_\sigma(\S) = \ind_{\sigma_1 \neq i}X_\sigma(\S).\]
\end{proof}

\section{Approximation results}

This section contains the main result of this work, Theorem \ref{thm:approx}. We show that the toy Fock space $\TFock(\S)$ together with its operators $a_i^\e$ approach the full Fock space $\Fock$ and its operators $a_t^\e$ when the diameter of the partition $\S$ approaches 0. 

Let us consider a sequence of partitions $\S_n = \{0 = t_0^{(n)} < t_1^{(n)} < \cdots < t_k^{(n)} < \cdots\}$ such that $\delta(\S_n) \to 0$. In order to lighten the notation, we put $\TFock(n) = \TFock(\S_n)$, $P_n = P_{\S_n}$ and $a^\e_i(n) = a^\e_i(\S_n)$.

\begin{theo}\label{thm:approx}For a sequence of partitions $\S_n$ of $\R_+$ such that $\delta(\S_n) \to 0$, one has the following approximation results:
\begin{enumerate}
\item For every $f \in \Fock$, $P_n f \to f$.
\item For all $t \in \R_+$, the operators 
\begin{align*}
a^\pm_t(n) &= \sum_{i:t_i^{(n)} \leq t} \sqrt{t_{i+1}^{(n)} - t_i^{(n)}} a_i^\pm(n),\\
a^\circ_t(n) &= \sum_{i:t_i^{(n)} \leq t}a_i^\circ(n),\\
a^\times_t(n) &= \sum_{i:t_i^{(n)} \leq t} \left(t_{i+1}^{(n)} - t_i^{(n)}\right ) a_i^\times(n)
\end{align*}
converge strongly, when $n \to \iy$, to $a_t^\pm$, $a_t^\circ$ and $a_t^\times$ respectively.
\end{enumerate}
\end{theo}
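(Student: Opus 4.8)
The plan is to establish the two assertions separately, using throughout that each $P_n$ is an orthogonal projection, hence a contraction, so that strong limits may be checked on a dense set or, after reduction, on a single fixed vector with uniformly controlled error terms.

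For part (1), I would use that $P_n$ is block-diagonal for the grading of $\Fock$ by the number of tensor factors: writing $f = f_0 \Omega + \sum_{m \geq 1} f_m$ with $f_m \in L^2(\R_+^m)$, the projection $P_n$ fixes $\Omega$ and preserves each level $L^2(\R_+^m)$, so $\norm{P_n f - f}^2 = \sum_{m \geq 1} \norm{P_n f_m - f_m}^2$ and it suffices to treat a fixed level and then sum. On level $m$, $P_n f_m = \sum_{\sigma \in \W_m^*} \scalar{X_\sigma}{f_m} X_\sigma$ is the orthogonal projection onto the span of the normalized indicators $X_\sigma$ of the \emph{off-diagonal} rectangles $\times_{j}[t_{\sigma_j}, t_{\sigma_j + 1})$ with $\sigma$ adapted. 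The key move is to compare this with the full step-function projection $E_n$ onto the span of \emph{all} rectangle indicators, writing $P_n = E_n - Q_n$ on level $m$, where $Q_n$ projects onto the diagonal rectangles (non-adapted words). Then two facts close the level: (a) $E_n f_m \to f_m$, proved first for compactly supported continuous $f_m$ (where $E_n f_m \to f_m$ uniformly via the modulus of continuity, since $\delta(\S_n) \to 0$) and extended to all $f_m$ by $\norm{E_n} \leq 1$ and density — crucially this needs no nestedness of the partitions; and (b) $\norm{Q_n f_m} \to 0$, since by Cauchy--Schwarz and orthogonality of distinct rectangle indicators $\norm{Q_n f_m}^2 \leq \norm{f_m \ind_{D_n}}^2$, where $D_n \subset \R_+^m$ is the union of the diagonal rectangles, i.e. the set where some pair of consecutive coordinates lies in one interval of $\S_n$; as $\delta(\S_n) \to 0$ one has $\ind_{D_n} \to 0$ a.e. (two coordinates at positive distance cannot share an interval once $\delta$ is small), so $\norm{f_m \ind_{D_n}} \to 0$ by dominated convergence. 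Summing over $m$ with the crude tail bound $\norm{P_n f_m - f_m} \leq 2\norm{f_m}$ and an $\e/2$ truncation yields $P_n f \to f$.

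For part (2) the decisive observation is that the rescalings are arranged precisely so that the defining sums telescope. Since $\sqrt{t_{i+1}-t_i}\,a_i^\pm(n) = P_n(a^\pm_{t_{i+1}} - a^\pm_{t_i})P_n$ and $a_i^\circ(n) = P_n(a^\circ_{t_{i+1}} - a^\circ_{t_i})P_n$, with $a^\e_0 = 0$ and $a^\e(\cdot)$ linear, I obtain, writing $s_n = t^{(n)}_{k+1}$ for $k$ the largest index with $t^{(n)}_k \leq t$,
\[a_t^\e(n) = P_n\, a^\e_{s_n}\, P_n \qquad (\e \in \{+,-,\circ\}),\]
while $a_i^\times(n) = P_n(\Id - a_i^\circ(n))P_n$ telescopes (in the lengths) to
\[a_t^\times(n) = s_n P_n - P_n\, a^\circ(g_n)\, P_n, \qquad g_n = \sum_{i : t_i \leq t}(t_{i+1}-t_i)\ind_{[t_i,t_{i+1})},\]
where $\norm{g_n}_\infty \leq \delta(\S_n) \to 0$ and $0 \leq s_n - t \leq \delta(\S_n)$, so $s_n \to t$. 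For $\e \in \{+,-,\circ\}$ and fixed $f$ I would split
\[P_n a^\e_{s_n} P_n f - a^\e_t f = P_n a^\e_{s_n}(P_n f - f) + P_n(a^\e_{s_n} - a^\e_t)f + (P_n - \Id)a^\e_t f.\]
The third term vanishes by part (1) applied to the vector $a^\e_t f$; the first is at most $\norm{a^\e_{s_n}}\,\norm{P_n f - f}$ and vanishes since $\norm{a^\pm_{s_n}} = \sqrt{s_n}$ and $\norm{a^\circ_{s_n}} = 1$ stay bounded while $\norm{P_n f - f} \to 0$; the middle term is handled by strong continuity of $s \mapsto a^\e_s$ at $t$. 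For $\e = \times$ one simply has $a_t^\times(n)f = s_n P_n f - P_n a^\circ(g_n) P_n f \to t f = a_t^\times f$, using $s_n \to t$, $P_n \to \Id$ strongly, and $\norm{a^\circ(g_n)} = \norm{g_n}_\infty \to 0$.

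The step deserving genuine care — and the main obstacle — is the strong continuity of the gauge family $s \mapsto a^\circ_s$ in the middle term when $\e = \circ$, because here norm continuity \emph{fails}: $\norm{a^\circ_{s_n} - a^\circ_t} = \norm{\ind_{[t,s_n)}}_\infty = 1 \not\to 0$, so no uniform estimate is available. Instead I would argue strongly: $(a^\circ_{s_n} - a^\circ_t)f$ multiplies each $f_m$ by $\ind_{[t,s_n)}(x_1)$, and since $[t,s_n)$ shrinks to the empty set, dominated convergence gives $\norm{\ind_{[t,s_n)}(x_1) f_m} \to 0$ on each level, whence $\norm{(a^\circ_{s_n}-a^\circ_t)f} \to 0$ after the usual tail truncation; for $\e = \pm$ the analogous bound is immediate and even norm-uniform, as $\norm{a^\pm_{s_n} - a^\pm_t} = \sqrt{s_n - t} \to 0$. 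By contrast part (1) is essentially soft once the splitting $P_n = E_n - Q_n$ isolates the off-diagonal constraint; the only subtlety there is that we do not assume the partitions to be nested, which forbids a direct martingale argument and forces the density-plus-uniform-bound reasoning.
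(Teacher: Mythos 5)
Your proposal is correct and follows essentially the same route as the paper: part (1) rests on the density of adapted rectangle step functions, and part (2) on the telescoping identity $a^\e_t(n) = P_n\, a^\e_{t^{(n)}}\, P_n$ followed by the same three-term triangle-inequality split, with the $\times$ case reduced to $t^{(n)}P_n$ plus a gauge term of norm at most $\delta(\S_n)$. The only difference is that you spell out two points the paper leaves terse --- the negligibility of the diagonal (non-adapted) rectangles via the decomposition $P_n = E_n - Q_n$ and dominated convergence, and the case $\e = \circ$ of the middle term, where the paper's norm bound $\norm{a^\circ(\ind_{[t,t^{(n)})})} = 1$ does not tend to zero, so your strong-continuity argument by dominated convergence is exactly the fix hidden behind the paper's ``adapts easily''.
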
 
\begin{proof}
For the fist part, consider a (not necessarily adapted) word $\sigma = [\sigma_1, \ldots, \sigma_k]$ and denote by $\ind_\sigma^{(n)}$ the indicator function of the rectangle $\times_{j=1}^k[t^{(n)}_{\sigma_j}, t^{(n)}_{\sigma_j+1})$ of $\R_+^k$. It is a classical result in integration theory that the simple functions $\{\ind_\sigma^{(n)}\}_{\sigma \in \W_k, n \geq 1}$ are dense in $L^2(\R_+^k)$ for all $k$. It is obvious that the result still holds when replacing $\W_k$ with the set of adapted words $\W_k^*$.

As for the second statement of the theorem, let us start by treating the case of $a_t^+$. For fixed $n$ and $t$, let $t^{(n)} = t_{i+1}^{(n)}$ , where $i$ is the last index appearing in the definition of $a^+_t(n)$, i.e. $t_i^{(n)} \leq t < t_{i+1}^{(n)}$. With this notation, we have $a^+_t(n) = \sum_{i:t_i^{(n)} \leq t} \sqrt{t_{i+1}^{(n)} - t_i^{(n)}} a_i^+(n) = P_n a_{t^{(n)}}^+P_n$. Hence, for any function $f \in \F$, we obtain:
\begin{align*}
\norm{a^+_t(n)f - a_t^+ f } &= \norm{ P_n a_{t^{(n)}}^+P_nf -  a_t^+ f} \leq\\
&\leq \norm{ P_n a_{t^{(n)}}^+P_nf -  P_n a_{t^{(n)}}^+f} +  \norm{ P_n a_{t^{(n)}}^+f - P_n a_t^+f } + \norm{ P_n a_t^+f - a_t^+f }\leq\\
&\leq \norm{ P_n a_{t^{(n)}}^+}\norm{(P_n-I)f} +  \norm{ P_n a^+\ind_{[t,t^{(n)})} }\norm{f} + \norm{(P_n -I)(a_t^+f)}.
\end{align*}
By the first point, $P_n \to I$ strongly, hence the first and the third terms above converge to 0. The norm of the operator appearing in the second term is bounded by the $L^2$ norm of $\ind_{[t,t^{(n)})}$ which is infinitely small when $n \to \iy$. Hence, the entire quantity converges to 0 and we obtained the announced strong convergence. The proof adapts easily to the cases of $a_t^-$ and $a_t^\circ$.

Finally, recall that  $a_i^\times(n) = P_n(\Id-a_i^\circ(n))P_n$. Hence, with the same notation as above, 
\[\sum_{i:t_i^{(n)} \leq t} \left(t_{i+1}^{(n)} - t_i^{(n)}\right) a_i^\times(n) = t^{(n)}P_n + \sum_{i:t_i^{(n)} \leq t} \left(t_{i+1}^{(n)} - t_i^{(n)}\right) a_i^\circ(n).\]
The second term above converges to zero in the strong operator topology thanks to the factor $t_{i+1}^{(n)} - t_i^{(n)}$ which is less than $\delta(\S_n)$, and thus we are left only with $ t^{(n)}P_n$ which converges, by the first point, to $t \cdot \Id$.
\end{proof}

\section{Applications to free probability theory}\label{sec:free_prob}

This section is more probabilistic in nature. We use the previous approximation result to show that the free Brownian motion and the free Poisson operators can be approached, in the strong operator topology, by sums of free Bernoulli-distributed operators living on the free toy Fock space. We obtain, as corollaries, already known free Donsker-like convergence results.

Let us start by recalling some basic facts about free noises and their realization on the free Fock space $\Fock$. The free Brownian motion $W_t$ and the free Poisson process $N_t$ were constructed in \cite{speicher-90} as free analogues of the classical Brownian motion (or Wiener process) and, respectively, classical Poisson jump processes. Recall that a process with stationary and freely independent increments is a collection of non commutative self-adjoint random variables $(X_t)_t$  with the following properties:
\begin{enumerate}
\item For all $s<t$, $X_t-X_s$ is free from the algebra generated by $\{X_u, u \leq s\}$;
\item The distribution of $X_t-X_s$ depends only on $t-s$.
\end{enumerate}
A free Brownian motion is a process with stationary and freely independent increments $(W_t)_t$ such that the distribution of  $W_t-W_s$ is a \emph{semi-circular} random variable of mean 0 and variance $t-s$. Recall that a standard (i.e. mean zero and variance one) semicircular random variable has distribution
\[d\mu(x) = \frac{1}{2\pi}\sqrt{4-x^2}\ind_{[-2,2]}(x) dx.\]
If $X$ is a standard semicircular random variable, then $(t-s)X$ is semicircular of variance $(t-s)$. In an analogue manner, a free Poisson process is a process with stationary and freely independent increments $(N_t)_t$ such that the distribution of  $N_t-N_s$ is a \emph{free Poisson} random variable of parameter $\lambda=t-s$. In general, the density of a free Poisson random variable is given by
\[d\nu_\lambda(x) = 
\begin{cases}
\frac{\sqrt{4\lambda-(x-1-\lambda)^2}}{2\pi x}\chi(x)dx & \text{if } \lambda \geq 1,\\
(1-\lambda) \delta_0 + \frac{\sqrt{4\lambda-(x-1-\lambda)^2}}{2\pi x}\chi(x)dx & \text{if } 0<\lambda < 1,\\
\end{cases}
\]
where $\chi$ is the indicator function of the interval $[(1-\sqrt{\lambda})^2,(1+\sqrt{\lambda})^2]$.

The free Brownian motion and the free Poisson process can be realized on the full Fock space $\Phi$ as $W_t = a^+_t + a^-_t$ and, respectively, $N_t = a^+_t + a^-_t + a^\circ_t + t\cdot \Id$. Generalization of these processes and stochastic calculus were considered in \cite{bia-spe, boz-spe, gss}.

For the sake of simplicity, throughout this section we shall consider the sequence of partitions $\S_n = \{k/n; k \in \N\}$; obviously $\delta(\S_n) = \frac 1 n \to 0$. The following result is an easy consequence of Theorem \ref{thm:approx}.
 
\begin{pr}\label{pr:brownian}
On $\TFock(n)$, consider the operator $X^{(n)}_i = a^+_i + a^-_i$, $i \in \N$. Then
\begin{enumerate}
\item For all $n \geq 1$, the family $\{X^{(n)}_i\}_{i \in \N}$ is a free family of Bernoulli random variables of distribution $\frac 1 2 \delta_{-1} + \frac 1 2 \delta_1$.
\item For all $t \in \R_+$, the operator
\[W_t^{(n)} = \frac{1}{\sqrt n}\sum_{i=0}^{\lfloor nt \rfloor}X_i^{(n)}\]
converges in the strong operator topology, when $n \to \iy$, to the operator of free Brownian motion $W_t =  a^+_t + a^-_t$.
\end{enumerate}
\end{pr}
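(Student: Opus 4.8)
The plan is to prove the two assertions separately, using the machinery already established. For part (1), I would fix $n$ and work entirely on the abstract free toy Fock space $\TFock \simeq \TFock(n)$, computing directly with the basis $\{X_\sigma\}_{\sigma \in \W^*}$. The self-adjointness of $X_i^{(n)} = a_i^+ + a_i^-$ is immediate from the matrix forms, since $a^+$ and $a^-$ are mutual adjoints. To identify the distribution of a single $X_i^{(n)}$, I would compute its moments in the vacuum state $\tau(\cdot) = \scalar{\Omega}{\cdot \, \Omega}$; using the action formulas (\ref{eq:op_toy_Fock_first}), the operator $X_i^{(n)}$ sends $\Omega \mapsto X_i$ and $X_i \mapsto \Omega$ (plus a creation term on longer words), so on the two-dimensional subspace $\C\Omega \oplus \C X_i$ it acts as the flip $\begin{bmatrix} 0 & 1 \\ 1 & 0 \end{bmatrix}$, whose spectrum is $\{-1, +1\}$ with the vacuum equidistributed between the two eigenvectors; this is exactly the Bernoulli law $\tfrac12\delta_{-1} + \tfrac12\delta_{+1}$. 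For the freeness of the family, I would invoke Proposition \ref{pr:free_orth} together with the free-product construction: each $X_i^{(n)}$ lives in the algebra attached to the $i$-th copy $\C^2_{(i)}$, these copies are orthogonal inside the free product, and the free product is designed precisely so that the ambient algebras $\B(\C^2_{(i)})$ are free in $(\B(\TFock), \tau)$. Hence the $X_i^{(n)}$ are free.

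For part (2), the strategy is to recognize $W_t^{(n)}$ as exactly the operator $a_t^+(n) + a_t^-(n)$ appearing in Theorem \ref{thm:approx}, and then quote that theorem. With the uniform partition $\S_n = \{k/n\}$ one has $t_{i+1}^{(n)} - t_i^{(n)} = 1/n$ for every $i$, so $\sqrt{t_{i+1}^{(n)} - t_i^{(n)}} = 1/\sqrt n$; comparing with the definition of $a_t^\pm(n)$ in Theorem \ref{thm:approx}, I would check that
\[
a_t^+(n) + a_t^-(n) = \frac{1}{\sqrt n}\sum_{i=0}^{\lfloor nt \rfloor}\bigl(a_i^+(n) + a_i^-(n)\bigr) = W_t^{(n)},
\]
where the index range $\{i : t_i^{(n)} \leq t\}$ becomes $\{0, 1, \ldots, \lfloor nt\rfloor\}$. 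Theorem \ref{thm:approx}(2) gives strong convergence $a_t^\pm(n) \to a_t^\pm$ separately, and since addition is jointly continuous in the strong operator topology on a fixed pair of sequences, the sum converges strongly to $a_t^+ + a_t^- = W_t$, which is the stated realization of free Brownian motion.

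The only genuine point requiring care, and what I expect to be the main obstacle, is the freeness claim in part (1): one must make sure that $X_i^{(n)} = a_i^+ + a_i^-$ really belongs to the algebra associated with the single copy $\C^2_{(i)}$ under the identification (\ref{eq:op_embed_first})--(\ref{eq:op_embed_last}), rather than spilling across copies. This is where the combinatorial structure of adapted words matters: the creation part $a_i^+$ is killed when the current word already starts with letter $i$, and the annihilation part $a_i^-$ fires only when it does, so the operator respects the block structure of the free product and is a bona fide element of the embedded $\B(\C^2_{(i)})$. Once this localization is confirmed, freeness is a direct application of the earlier proposition and requires no further moment computation.
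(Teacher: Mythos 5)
Your proposal is correct and takes essentially the paper's route: the paper treats this result as an immediate consequence of Theorem~\ref{thm:approx} (your part (2) rewriting with $\sqrt{t_{i+1}^{(n)}-t_i^{(n)}}=1/\sqrt{n}$ is exactly that), and your moment computation on the invariant subspace $\C\Omega\oplus\C X_i$ plus freeness from the free-product structure mirrors the proof the paper spells out for the free Poisson analogue. One small slip worth fixing: the freeness in part (1) follows from the proposition that the algebras $\B(\H_i)$ are free in the free product $(\B(\H),\phi)$ --- which you correctly describe --- and not from Proposition~\ref{pr:free_orth}, which concerns creation and gauge operators on orthogonal subspaces of the full Fock space and does not directly apply to the $P_\S$-compressed operators.
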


Let us show now that the strong operator convergence implies the convergence in distribution of the corresponding processes. Let $t_1, \ldots, t_s \in \R_+$ and $k_1, \ldots, k_s \in \N$. Since, by the previous result, $W_t^{(n)} \to W_t$ strongly, and multiplication is jointly strongly continuous on bounded subsets, we get that $(W_{t_1}^{(n)})^{k_1} \cdots (W_{t_s}^{(n)})^{k_s} \to W_{t_1}^{k_1}\cdots W_{t_s}^{k_s}$ strongly. Strong convergence implies convergence of the inner products $\scalar{\Omega}{\cdot \Omega}$ and thus the following corollary (which is a direct consequence of the Free Central Limit Theorem \cite{nica-speicher, vdn}) holds. 
\begin{cor}
The distribution of the family $\{W^{(n)}_t\}_{t \in \R_+}$ converges, as $n$ goes to infinity, to the distribution of a free Brownian motion $\{W_t\}_{t \in \R_+}$.
\end{cor}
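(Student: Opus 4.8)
The plan is to reduce convergence in distribution to the convergence of all joint moments, and then to read off those moments from the strong operator convergence already established in Proposition~\ref{pr:brownian}. By definition, the joint distribution of the family $\{W_t^{(n)}\}_{t\in\R^+}$ is the collection of values $\scalar{\Omega}{P\,\Omega}$ where $P$ runs over non-commutative polynomials in the (self-adjoint) operators $W_t^{(n)}$; by linearity it suffices to treat monomials, that is, to prove
\[
\scalar{\Omega}{W_{s_1}^{(n)}W_{s_2}^{(n)}\cdots W_{s_m}^{(n)}\,\Omega} \longrightarrow \scalar{\Omega}{W_{s_1}W_{s_2}\cdots W_{s_m}\,\Omega}
\]
for every finite word of times $s_1,\dots,s_m\in\R^+$. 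Since $W_t=a_t^++a_t^-$ is by construction the free Brownian motion on $\Fock$, the right-hand sides are exactly the moments of a free Brownian motion, so it is enough to establish this convergence.

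First I would upgrade the strong convergence $W_t^{(n)}\to W_t$ of Proposition~\ref{pr:brownian}(2) to strong convergence of products. This is the only step requiring genuine care: strong operator convergence passes through multiplication only on uniformly bounded families, via the standard splitting $A_nB_n\xi-AB\xi=A_n(B_n\xi-B\xi)+(A_n-A)B\xi$, whose first term is controlled by $\sup_n\norm{A_n}$. The main obstacle is therefore to check that, for each fixed $t$, the sequence $\{W_t^{(n)}\}_n$ is uniformly bounded in $n$. I would deduce this from the identity $W_t^{(n)}=P_n(a_{t^{(n)}}^++a_{t^{(n)}}^-)P_n$ used in the proof of Theorem~\ref{thm:approx}: since $\norm{a^\pm(h)}=\norm{h}$ and $\norm{P_n}\le 1$, one gets $\norm{W_t^{(n)}}\le 2\sqrt{t^{(n)}}$ with $t^{(n)}\to t$, hence a bound independent of $n$. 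Applying the product lemma repeatedly then gives $W_{s_1}^{(n)}\cdots W_{s_m}^{(n)}\to W_{s_1}\cdots W_{s_m}$ strongly.

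Finally, strong convergence of these products yields convergence of the vacuum expectations at once: $W_{s_1}^{(n)}\cdots W_{s_m}^{(n)}\Omega\to W_{s_1}\cdots W_{s_m}\Omega$ in norm, and continuity of the inner product lets the state $\scalar{\Omega}{\cdot\,\Omega}$ pass to the limit, which is precisely the displayed moment convergence; hence the family converges in distribution. As a consistency check one may note, as the surrounding text remarks, that the freeness and common Bernoulli law of the increments $X_i^{(n)}$ (Proposition~\ref{pr:brownian}(1)) identify the limit as semicircular through the Free Central Limit Theorem, recovering the same conclusion by an independent route. The reduction to monomials and the final continuity of the state are routine; the one point deserving attention is the uniform norm bound that legitimises passing the limit through products.
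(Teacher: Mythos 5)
Your proof is correct and follows essentially the same route as the paper: strong convergence $W_t^{(n)}\to W_t$ plus joint strong continuity of multiplication on bounded sets gives strong convergence of the monomials $W_{s_1}^{(n)}\cdots W_{s_m}^{(n)}$, and then the vacuum state passes to the limit. The only difference is that you explicitly verify the uniform bound $\norm{W_t^{(n)}}\leq 2\sqrt{t^{(n)}}$ via the identity $W_t^{(n)}=P_n(a^+_{t^{(n)}}+a^-_{t^{(n)}})P_n$, a detail the paper leaves implicit when invoking boundedness.
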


We move on to the free Poisson process $N_t$ and we state the analogue of Proposition \ref{pr:brownian}.

\begin{pr}
On $\TFock(n)$, consider the operator $Y^{(n)}_i = a^+_i + a^-_i + \sqrt{n} a^\circ_i + \frac{1}{\sqrt{n}}a^\times_i$. Then
\begin{enumerate}
\item For all $n \geq 1$, the family $\{Y^{(n)}_i\}_{i \in \N}$ is a free family of Bernoulli random variables of distribution $\frac{1}{n+1} \delta_{\frac{n+1}{\sqrt{n}}} + \frac{n}{n+1} \delta_{0}$.
\item For all $t \in \R_+$, the operator
\[N_t^{(n)} = \frac{1}{\sqrt n}\sum_{i=0}^{\lfloor nt \rfloor}Y_i^{(n)}\]
converges strongly, when $n \to \iy$, to the operator of the free Poisson process $N_t =  a^+_t + a^-_t + a^\circ_t + a^\times_t$.
\end{enumerate}
\end{pr}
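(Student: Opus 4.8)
The plan is to handle the two assertions separately: the first follows from an elementary spectral computation on $\C^2$ together with the freeness already built into the free product, while the second is a direct corollary of Theorem~\ref{thm:approx} once the scaling factors are matched.

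For the first assertion I would begin with freeness. Each $Y_i^{(n)}$ is a linear combination of $a_i^+, a_i^-, a_i^\circ, a_i^\times$, hence lies in the copy $\B(\C^2_{(i)})$ sitting inside $\B(\TFock)$; since the algebras $\{\B(\C^2_{(i)})\}_{i\in\N}$ are free in the free product (the freeness proposition established earlier, cf.\ also Proposition~\ref{pr:free_orth}), the family $\{Y_i^{(n)}\}_i$ is free. To identify the common law I would write $Y_i^{(n)}$ as a $2\times 2$ matrix on $\C^2_{(i)}$; summing the four basic matrices against their coefficients gives
\[
Y_i^{(n)}=\begin{bmatrix} 1/\sqrt n & 1\\ 1 & \sqrt n\end{bmatrix}.
\]
This matrix is self-adjoint, so its distribution in the vacuum state $\tau(\cdot)=\scalar{\Omega_i}{\cdot\,\Omega_i}$ is the spectral measure $\sum_\lambda \abs{\scalar{\Omega_i}{v_\lambda}}^2\delta_\lambda$, with $v_\lambda$ ranging over normalized eigenvectors. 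Its determinant is $0$ and its trace is $(n+1)/\sqrt n$, so the eigenvalues are $0$ and $(n+1)/\sqrt n$; computing the corresponding eigenvectors and their overlap with $\Omega_i=(1,0)^\transp$ yields the weights $n/(n+1)$ and $1/(n+1)$, which is exactly the announced Bernoulli law $\frac{1}{n+1}\delta_{(n+1)/\sqrt n}+\frac{n}{n+1}\delta_0$.

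For the second assertion I would substitute the definition of $Y_i^{(n)}$ (with $a_i^\e=a_i^\e(n)$) and distribute the prefactor $1/\sqrt n$, obtaining
\[
N_t^{(n)}=\sum_{i=0}^{\lfloor nt\rfloor}\Big[\tfrac{1}{\sqrt n}(a_i^+ + a_i^-)+a_i^\circ+\tfrac1n\, a_i^\times\Big].
\]
For the uniform partition $\S_n=\{k/n\}$ one has $t_{i+1}^{(n)}-t_i^{(n)}=1/n$ and $t_i^{(n)}\le t$ iff $i\le \lfloor nt\rfloor$, so comparing term by term with the definitions in Theorem~\ref{thm:approx} identifies the four groups of summands as $a_t^+(n)$, $a_t^-(n)$, $a_t^\circ(n)$ and $a_t^\times(n)$; that is, $N_t^{(n)}=a_t^+(n)+a_t^-(n)+a_t^\circ(n)+a_t^\times(n)$. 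By Theorem~\ref{thm:approx}(2) each of these four sequences converges strongly, and since a finite sum of strongly convergent sequences converges strongly to the sum of the limits, $N_t^{(n)}\to a_t^++a_t^-+a_t^\circ+a_t^\times=N_t$.

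I do not expect a genuine obstacle: both parts are short given the machinery in place. The only point that demands care, and the only place an error could creep in, is the bookkeeping of the scaling factors: one must check that the weights $\sqrt n$ on $a_i^\circ$ and $1/\sqrt n$ on $a_i^\times$ inside $Y_i^{(n)}$, after multiplication by the global $1/\sqrt n$, reproduce exactly the coefficients $1$ and $t_{i+1}^{(n)}-t_i^{(n)}=1/n$ required by $a_t^\circ(n)$ and $a_t^\times(n)$ in Theorem~\ref{thm:approx}. Because the gauge and number terms scale with different powers of $n$ than the creation and annihilation terms, it is essential that these three powers line up, which they do precisely for the uniform partition.
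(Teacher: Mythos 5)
Your proposal is correct and takes essentially the same route as the paper: the same $2\times 2$ matrix realization of $Y_i^{(n)}$ on $\C^2_{(i)}$ for part (1), and for part (2) the same regrouping of $N_t^{(n)}$ into the four sums $a_t^+(n)+a_t^-(n)+a_t^\circ(n)+a_t^\times(n)$ followed by Theorem~\ref{thm:approx}. The only (harmless) variations are that you identify the Bernoulli law by diagonalizing (eigenvalues $0$ and $(n+1)/\sqrt n$, vacuum weights $n/(n+1)$ and $1/(n+1)$) where the paper instead matches the moments $\scalar{\Omega}{(Y^{(n)}_i)^k \Omega} = \frac{1}{n+1}\left(\frac{n+1}{\sqrt{n}}\right)^k$ directly, and that you make the freeness of the family explicit via the free-product proposition, a point the paper leaves implicit.
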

\begin{proof}
As an operator on $\C^2$, $Y^{(n)}_i$ has the form
\[Y^{(n)}_i=\begin{bmatrix}
\frac{1}{\sqrt{n}}&1\\
1&\sqrt{n}
\end{bmatrix}.\]
The $k$-th moment of $Y^{(n)}_i$ is easily seen to be given by the formula
\[\scalar{\Omega}{(Y^{(n)}_i)^k \Omega} = \frac{1}{n+1}\left( \frac{n+1}{\sqrt{n}}\right)^k,\]
which is the same as the $k$-th moment of the probability distribution $\frac{1}{n+1} \delta_{\frac{n+1}{\sqrt{n}}} + \frac{n}{n+1} \delta_{0}$, and the first part follows. 
For the second part, we have
\begin{align*}
N_t^{(n)} &= \frac{1}{\sqrt n}\sum_{i=0}^{\lfloor nt \rfloor}Y_i^{(n)} = \sum_{i; t_i^{(n)}\leq t}\left[\frac{1}{\sqrt n}a_i^+ + \frac{1}{\sqrt n}a_i^- + a_i^\circ + \frac{1}{n}a_i^\times \right]=\\
 &=\sum_{i; t_i^{(n)}\leq t} \sqrt{t_{i+1}^{(n)} - t_i^{(n)}} \left(a_i^+(n) + a_i^-(n)\right) + \sum_{i; t_i^{(n)}\leq t}a_i^\circ + \sum_{i; t_i^{(n)}\leq t} \left(t_{i+1}^{(n)} - t_i^{(n)}\right) a_i^\times.
\end{align*}
Using Theorem \ref{thm:approx}, one obtains $N_t^{(n)} \to N_t$ in the strong operator topology.
\end{proof}

Again, we obtain as a corollary the convergence in distribution of the process $(N_t^{(n)})_t$ to the free Poisson process, which is in fact a reformulation of the Free Poisson limit theorem (\cite{nica-speicher}, pp. 203).

\begin{cor}
The distribution of the family $\{N^{(n)}_t\}_{t \in \R_+}$ converges, as $n$ goes to infinity, to the distribution of a free Poisson process $\{N_t\}_{t \in \R_+}$.
\end{cor}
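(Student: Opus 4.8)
The plan is to reproduce, essentially verbatim, the argument given above for the free Brownian motion corollary, with $N$ in place of $W$. Recall that, by the definition of a non commutative distribution from Section~2, the convergence in distribution of $\{N^{(n)}_t\}_{t}$ to $\{N_t\}_{t}$ means exactly that every mixed moment converges: for all $s\geq 1$, all $t_1, \ldots, t_s \in \R_+$ and all $k_1, \ldots, k_s \in \N$,
\[\scalar{\Omega}{(N_{t_1}^{(n)})^{k_1} \cdots (N_{t_s}^{(n)})^{k_s}\,\Omega} \longrightarrow \scalar{\Omega}{N_{t_1}^{k_1} \cdots N_{t_s}^{k_s}\,\Omega}.\]
Since the preceding proposition gives $N_t^{(n)} \to N_t$ strongly for each fixed $t$, and since multiplication is jointly strongly continuous on norm-bounded subsets of $\B(\Fock)$, the product $(N_{t_1}^{(n)})^{k_1} \cdots (N_{t_s}^{(n)})^{k_s}$ converges strongly to $N_{t_1}^{k_1} \cdots N_{t_s}^{k_s}$; applying these operators to $\Omega$ and using the continuity of the inner product in one variable then yields the convergence of the mixed moments. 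The limiting moments are by construction those of the free Poisson process, since $N_t$ on $\Fock$ is precisely that process, so this gives the corollary.

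The one point that must be checked before invoking joint strong continuity — and it is the only place where the Poisson case genuinely differs from the Brownian one — is the uniform boundedness $\sup_n \norm{N_t^{(n)}} < \iy$ for each fixed $t$. Here I would use the decomposition $N_t^{(n)} = a_t^+(n) + a_t^-(n) + a_t^\circ(n) + a_t^\times(n)$ displayed in the proof of the previous proposition and bound each summand separately. Writing $t^{(n)}$ for the partition point just to the right of $t$, as in the proof of Theorem~\ref{thm:approx}, each summand is a compression $P_n(\cdot)P_n$ of a bounded operator on $\Fock$, so that $\norm{a_t^\pm(n)} \leq \norm{\ind_{[0,t^{(n)})}}_{L^2} = \sqrt{t^{(n)}}$ and $\norm{a_t^\circ(n)} \leq \norm{\ind_{[0,t^{(n)})}}_{L^\iy} = 1$; finally, from the identity $a_t^\times(n) = t^{(n)}P_n + \sum_{i:t_i^{(n)} \leq t}(t_{i+1}^{(n)} - t_i^{(n)})\,a_i^\circ(n)$ and the fact that the $a_i^\circ(n)$ are mutually orthogonal projections (whence their weighted sum has norm equal to the largest weight), one gets $\norm{a_t^\times(n)} \leq t^{(n)} + \delta(\S_n)$. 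Since $t^{(n)} \to t$ and $\delta(\S_n) \to 0$, all three bounds are uniform in $n$, which is exactly what is needed.

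With uniform boundedness secured, the joint strong continuity of multiplication applies word for word as in the Brownian case. I do not expect any serious obstacle: beyond a direct transcription of that argument, the only content is the elementary norm estimate above, together with the observation that $N_t$ and each $N_t^{(n)}$ are self-adjoint (hence bona fide non commutative random variables), which is clear since $a_t^- = (a_t^+)^*$ while $a_t^\circ$ and $a_t^\times$ are self-adjoint. Alternatively, once the strong convergence of the previous proposition has pinned down the limiting moments, one may simply note that the statement is a reformulation of the Free Poisson limit theorem (\cite{nica-speicher}).
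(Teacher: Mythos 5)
Your proposal is correct and takes essentially the same route as the paper: strong operator convergence of $N_t^{(n)}$ from the preceding proposition, joint strong continuity of multiplication on norm-bounded subsets of $\B(\Fock)$, and convergence of the vacuum expectations $\scalar{\Omega}{\cdot\,\Omega}$, exactly the argument the paper spells out before the Brownian corollary and then reinvokes (together with the reference to the Free Poisson limit theorem) for this one. Your explicit check that $\sup_n \norm{N_t^{(n)}} < \iy$ fills in a point the paper leaves tacit, and the estimates are sound; note only that the identity should read $\sum_{i:t_i^{(n)} \leq t} \bigl(t_{i+1}^{(n)} - t_i^{(n)}\bigr) a_i^\times(n) = t^{(n)}P_n - \sum_{i:t_i^{(n)} \leq t} \bigl(t_{i+1}^{(n)} - t_i^{(n)}\bigr) a_i^\circ(n)$ (a sign slip inherited from the paper's own display), which does not affect your bound $t^{(n)} + \delta(\S_n)$.
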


\section{Higher multiplicities}\label{sec:multi}

We generalize now the previous construction of the free toy Fock space by replacing $\C^2$ with the $N+1$-dimensional complex Hilbert space $\C^{N+1}$. Much of what was done in the $\C^2$ extends easily to the generalized case, so we only sketch the construction, leaving the details to the reader (for an analogue setup in the symmetric Fock space, see \cite{attal-pautrat}). In what follows, $N \geq 1$ is a fixed integer, called the \emph{multiplicity} of the Fock space. 

Start with a countable family of copies of $\C^{N+1}$, each endowed with a fixed basis $(\Omega, X^1, \ldots , X^N)$. We shall sometimes note $X^0 = \Omega$. We introduce the free toy Fock space of multiplicity $N$ (see Section \ref{sec:free_toy_fock}):
\[\TFock = \invstackrel{\bigstar}{i \in \N} \C^{N+1}(i),\]
where the countable tensor product is defined with respect to the stabilizing sequence of vectors $\Omega(i) \in \C^{N+1}(i)$. An orthonormal basis of this space is indexed by the set $\W^{N*}$ of generalized adapted words $\sigma = [(i_1, j_1) , (i_2, j_2), \ldots, (i_n, j_n)]$, where $n \in \N$, $i_1 \neq i_2 \neq \cdots \neq i_n$ and $j_1, \ldots, j_n \in \{1, \ldots, N\}$, the corresponding basis element being $X_\sigma = X^{j_1}(i_1) \otimes X^{j_2}(i_2) \otimes \cdots \otimes X^{j_n}(i_n)$.

On each copy of $\C^{N+1}$ we introduce the matrix units $a^i_j$ defined by
\[a^i_jX^k = \delta_{ik}X^j, \quad i,j,k=0,1,\ldots, N.\]

We shall now show how the discrete structure of the free toy Fock space of multiplicity $N$ approximates the free Fock space $\Fock = \F(L^2(\R_+; \C^N))$. To this end, consider a partition $\S = \{0=t_0 < t_1 < \cdots < t_n < \cdots\}$  of $\R_+$ and recall the decomposition of the free Fock space of multiplicity $N$ as a free product of ``smaller'' Fock spaces:
\[ \F(L^2(\R_+; \C^N)) = \invstackrel{\bigstar}{i \in \N} \F(L^2([t_i, t_{i+1}); \C^N)).\]
In each factor of the free product we consider $N+1$ distinguished functions: the constant function $\Omega_i$ (sometimes denoted by $X^0(i)$) and the normalized indicator functions
\[X^j(i) = \frac{\ind^j_{[t_i, t_{i+1})}}{\sqrt{t_{i+1} - t_i}} = \frac{\ind^j_{t_{i+1}} - \ind^j_{t_i}}{\sqrt{t_{i+1} - t_i}}, \quad 1\leq j \leq N,\]
where $\ind^j_A(x)=(0, \ldots, 0, 1, 0, \dots, 0)^\top$ with the 1 in the $j$-th position if $x \in A$ and 0 otherwise. For a generalized word $\sigma = [(i_1, j_1) , (i_2, j_2), \ldots, (i_n, j_n)]$, define the element $X_\sigma(\S) \in \Fock$ by
\[X_\sigma(\S) = X^{j_1}(i_1) \otimes \cdots \otimes X^{j_n}(i_n)=[(x_1, \ldots, x_n) \mapsto  \frac{\prod_{k=1}^n\ind^{j_k}_{[t_{i_k} , t_{i_k+1})}(x_k)}{\prod_{k=1}^n\sqrt{t_{i_k+1} - t_{i_k}}}],\]
with $\sigma = [(i_1, j_1) , (i_2, j_2), \ldots, (i_n, j_n)]$. The toy Fock space associated to $\S$ (denoted by $\TFock(\S)$) is the span of $X_\sigma(\S)$ for all generalized adapted words $\sigma \in \W^{N*}$. $\TFock(\S)$ is a closed subspace of the full Fock space $\Fock$ and it is naturally isomorphic to the abstract toy Fock space of multiplicity $N$, $\TFock$. For a given sequence of refining partitions $\S_n$ whose diameters converge to zero, the toy Fock spaces and the operators $a^i_j$ approximate the Fock space $\Fock$ and its corresponding operators (compare with Theorem \ref{thm:approx}):
\begin{theo}\label{thm:approx_multi}Let $\Fock$ be the free Fock space of multiplicity $N$ and $\S_n$ a sequence of refining partitions of $\R_+$ such that $\delta(\S_n) \to 0$. Then one has the following approximation results:
\begin{enumerate}
\item For every $f \in \Fock$, $P_n f \to f$.
\item For $i,j \in \{0,1, \ldots, N\}$, define $\e_{ij} = \frac 1 2 (\delta_{0i} + \delta_{0j})$. Then, for all $t \in \R_+$, the operators 
\[\sum_{k:t_k^{(n)} \leq t} (t_{k+1}^{(n)} - t_k^{(n)})^{\e_{ij}} a^i_j(k)\]
converge strongly, when $n \to \iy$, to $a^i_j(t)$.
\end{enumerate}
\end{theo}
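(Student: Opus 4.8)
The plan is to run the proof of Theorem \ref{thm:approx} case by case in the index pair $(i,j)$, exploiting the fact that the exponent $\e_{ij} = \frac12(\delta_{0i}+\delta_{0j})$ is rigged precisely so that the four families of matrix units reproduce the four scalar noises with their correct scalings. Indeed $a^0_j$ (with $j\geq 1$) is a creation operator and receives weight $\e_{0j}=\frac12$; $a^i_0$ (with $i\geq 1$) is an annihilation operator, again with weight $\frac12$; $a^i_j$ (with $i,j\geq 1$) is a gauge operator and receives weight $0$; and $a^0_0$ is the scalar (time) operator and receives weight $1$. In each instance the discrete operator $a^i_j(k)$ on the $k$-th copy is the natural generalization of the construction of Section~\ref{sec:free_toy_fock}, namely $P_n$ composed with the continuous noise $\tilde a^i_j$ localized on $[t_k^{(n)},t_{k+1}^{(n)})$ and then projected again.

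First I would dispose of part (1). Fixing $k$ and a (not necessarily adapted) generalized word $\sigma=[(i_1,j_1),\ldots,(i_k,j_k)]$, the vector $X_\sigma(\S_n)$ is a normalized indicator of a rectangle in $\R_+^k$ tensored with the fixed basis vector indexed by $(j_1,\ldots,j_k)$ in the multiplicity space. As in the scalar case, these step functions are dense in $L^2(\R_+^k;(\C^N)^{\otimes k})$ for refining partitions of vanishing diameter, and passing from arbitrary words to adapted words $\W^{N*}$ costs nothing in the limit; hence $P_n f \to f$ for every $f$, which in turn gives $P_n \to \Id$ strongly.

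For part (2), in the three non-scalar cases I would first verify the bookkeeping identity
\[
\sum_{k:t_k^{(n)}\leq t}\bigl(t_{k+1}^{(n)}-t_k^{(n)}\bigr)^{\e_{ij}} a^i_j(k) \;=\; P_n\,\tilde a^i_j(t^{(n)})\,P_n,
\]
where $t^{(n)}=t_{k+1}^{(n)}$ for the last index with $t_k^{(n)}\leq t$ and $\tilde a^i_j$ is the corresponding continuous noise. This follows from linearity of the continuous creation/annihilation/gauge operators in their argument: the rescaling by $(t_{k+1}^{(n)}-t_k^{(n)})^{\e_{ij}}$ exactly cancels the normalizing factor $\sqrt{t_{k+1}^{(n)}-t_k^{(n)}}$ built into $X^j(k)$, so the disjointly supported indicator pieces recombine into $\ind^j_{[0,t^{(n)})}$. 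One then runs the triangle inequality verbatim as in Theorem \ref{thm:approx}: the first and third terms vanish by the strong convergence $P_n\to\Id$, while the middle term is $\norm{\tilde a^i_j(\ind_{[t,t^{(n)})})f}$. For creation and annihilation this is bounded by the operator norm $\norm{\ind_{[t,t^{(n)})}}_{L^2}=\sqrt{t^{(n)}-t}\to 0$. For the gauge operators $a^i_j$ with $i,j\geq 1$ the operator norm does \emph{not} shrink, so here one argues for fixed $f$: since $\tilde a^i_j(\ind_{[t,t^{(n)})})$ multiplies the first variable by $\ind_{[t,t^{(n)})}$ (composed with the norm-one matrix unit on the multiplicity space), dominated convergence over the shrinking region $[t,t^{(n)})\times\R_+^{\,\cdot}$ forces $\norm{\tilde a^i_j(\ind_{[t,t^{(n)})})f}\to 0$.

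The scalar case $a^0_0$ is the only one not obtained by localizing a continuous noise, and I expect it to be the main (though modest) obstacle. Here I would use the relation $\sum_{j=0}^N a^j_j=\Id$ on each copy of $\C^{N+1}$, that is $a^0_0=\Id-\sum_{j=1}^N a^j_j$, which at the level of embedded operators reads $a^0_0(k)=P_n\bigl(\Id-\sum_{j=1}^N a^j_j(k)\bigr)P_n$. Since $\e_{00}=1$, summation gives
\[
\sum_{k:t_k^{(n)}\leq t}\bigl(t_{k+1}^{(n)}-t_k^{(n)}\bigr)\,a^0_0(k) \;=\; t^{(n)}P_n \;-\; \sum_{j=1}^N\;\sum_{k:t_k^{(n)}\leq t}\bigl(t_{k+1}^{(n)}-t_k^{(n)}\bigr)\,a^j_j(k).
\]
Each $a^j_j(k)$ is the orthogonal projection onto words whose first letter is $(k,j)$, so the inner sums are block-diagonal contractions scaled by factors $t_{k+1}^{(n)}-t_k^{(n)}\leq\delta(\S_n)$; the double sum therefore vanishes even in operator norm, leaving $t^{(n)}P_n\to t\cdot\Id$ by part (1). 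This is exactly the $\C^2$ argument $a^\times_i(\S)=P_\S(\Id-a^\circ_i(\S))P_\S$, now summed over the $N$ diagonal gauge directions.
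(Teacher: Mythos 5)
Your proof is correct and takes exactly the route the paper intends: the paper gives no separate proof of Theorem \ref{thm:approx_multi} (the details are explicitly left to the reader as an adaptation of Theorem \ref{thm:approx}), and your case-by-case treatment --- weight $\tfrac12$ for the creation and annihilation units $a^0_j$, $a^i_0$, weight $0$ for the gauge units, weight $1$ for $a^0_0$ via the identity $a^0_0 = \Id - \sum_{j=1}^N a^j_j$ generalizing $a^\times_i(\S) = P_\S(\Id - a^\circ_i(\S))P_\S$ --- is precisely that adaptation, recombining the localized noises into $P_n\,\tilde a^i_j(t^{(n)})\,P_n$ and running the same three-term triangle inequality. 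Your fixed-vector dominated-convergence argument for the gauge case, where the operator norm of $\tilde a^i_j(\ind_{[t,t^{(n)})})$ does not shrink, correctly fills in a point that the scalar proof in the paper passes over with ``the proof adapts easily''.
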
 

\subsection*{An example for $N=2$}
Let us end this section by constructing an approximation of a two-dimensional free Brownian motion constructed on a free Fock space of multiplicity $N=2$. To this end, define the free Fock space $\Phi = \F(L^2(\R_+;\C^2))$ and its discrete approximation, the free toy Fock space $\TFock = \bigstar_{k \in \N} \C^3_{(k)}$. The simplest realization of two freely independent free Brownian motions on $\Fock$ is the pair of operator processes $W_1(\cdot), W_2(\cdot) \in \B(\Fock)$ defined by:
\[W_1(t) = a^0_1(t) + a^1_0(t) \text{ and } W_2(t) = a^0_2(t) + a^2_0(t).\]
First of all, it is obvious that both $W_1(\cdot)$ and $W_2(\cdot)$ are free Brownian motions (see Section \ref{sec:free_prob}). Moreover, the families $(W_1(t))_t$ and $(W_2(t))_t$ are freely independent since the functions $\ind^1_s$ and $\ind^2_t$ are orthogonal in $ \F(L^2(\R_+;\C^2))$ (see Proposition \ref{pr:free_orth}).
We consider, as we did in Section \ref{sec:free_prob}, the sequence of refining partitions $\S_n = \{k/n; k \in \N\}$. We introduce the following two families of operators:
\begin{align*}
Y_1(k) &= a^0_1(k) + a^1_0(k), \\
Y_2(k) &= a^0_2(k) + a^2_0(k),
\end{align*}
and respectively
\begin{align*}
Z_1(k) &= a^0_1(k) + a^1_0(k) - a^2_2(k),\\
Z_2(k) &= a^0_2(k) + a^2_0(k)-[a^1_2(k) + a^2_1(k) + a^2_2(k)],
\end{align*}
for $k \in \N$. It follows from Theorem \ref{thm:approx_multi} that for all $t \in \R_+$, both families are approximations of a two-dimensional Brownian motion:
\[\frac{1}{\sqrt n} \left(\sum_{i=0}^{\lfloor nt \rfloor}Y_1(n), \sum_{i=0}^{\lfloor nt \rfloor}Y_2(n) \right) \invstackrel{\longrightarrow}{n \to \iy} \left(W_1(t), W_2(t)\right)\]
and
\[\frac{1}{\sqrt n} \left(\sum_{i=0}^{\lfloor nt \rfloor}Z_1(n), \sum_{i=0}^{\lfloor nt \rfloor}Z_2(n) \right) \invstackrel{\longrightarrow}{n \to \iy} \left(W_1(t), W_2(t)\right),\]
where the limits hold in the strong operator topology.
However, the building blocks of these approximating processes have completely different behaviors at fixed $k$. To start, note that the self-adjoint operators $Y_1(k)$ and $Y_2(k)$, represented, in the basis $(\Omega, X^1,  X^2)$, by the hermitian matrices 
\[ Y_1 = \begin{bmatrix}
0&1&0\\
1&0&0\\
0&0&0
\end{bmatrix}
\text{ and } Y_2 = \begin{bmatrix}
0&0&1\\
0&0&0\\
1&0&0
\end{bmatrix}\]
do not commute. Hence, they do not admit a classical joint distribution, i.e. it does not exist a probability measure $\mu$ on $\R^2$ such that
\begin{equation}\label{eq:joint_distrib}
\int_{\R^2}y_1^my_2^n d\mu(y_1,y_2) = \scalar{\Omega}{Y_1^mY_2^n\Omega}.
\end{equation}
On the contrary, for each $k$, the operators $Z_1(k)$ and $Z_2(k)$, which act on $\C^3$ as the matrices
\[ Z_1 = \begin{bmatrix}
0&1&0\\
1&0&0\\
0&0&-1
\end{bmatrix}
\text{ and } Z_2 = \begin{bmatrix}
0&0&1\\
0&0&-1\\
1&-1&-1
\end{bmatrix},\]
 commute and they admit the following classical joint distribution (in the sense of equation (\ref{eq:joint_distrib})):
\[\mu = \frac{1}{2}\delta_{(1,0)} + \frac{1}{3}\delta_{(-1,1)} + \frac{1}{6}\delta_{(-1,-2)}.\]
More details on high multiplicity Fock spaces and the analogue construction in the commutative case can be found in \cite{attal-emry, attal-pautrat}.

\end{document}